\DeclareMathAlphabet{\mathbbm}{U}{bbm}{m}{n}
\definecolor{CadetBlue}{cmyk}{0.62, 0.57, 0.23, 0 }
\definecolor{black}{cmyk}{1, 0.5, 0, 0 }
\definecolor{RedViolet}{cmyk}{0.07, 0.9, 0, 0.34 }
\definecolor{SeaGreen}{cmyk}{0.69, 0, 0.5, 0}
\DeclareMathAlphabet{\mathpzc}{OT1}{pzc}{m}{it}
\newcommand{\R}{\mathbb R}
\newcommand{\C}{\mathbb C}
\newcommand{\F}{\mathbb F}
\newcommand{\N}{\mathbb N}
\newcommand{\Q}{\mathbb Q}
\newcommand{\Z}{\mathbb Z}
\newcommand{\e}{\upvarepsilon}
\newtheorem{theo}{Theorem}
\newtheorem{lemm}{Lemma}
\newtheorem{prop}{Proposition}
\newtheorem{coro}{Corollary}
\theoremstyle{definition}
\theoremstyle{remark}
\newtheorem{note}{Note}
\title[Quantum $j$-Invariant in Positive Characteristic I]{Quantum $j$-Invariant in Positive Characteristic I: Definition and Convergence}
\author{L. Demangos}
\address{Instituto de Matem\'{a}ticas -- Unidad Cuernavaca, Universidad
Nacional Autonoma de M\'{e}xico, Av. Universidad S/N, C.P. 62210
Cuernavaca, Morelos, M\'{e}xico}
\curraddr{Department of Mathematical Sciences (Mathematics Division),
University of Stellenbosch,
Private Bag X1,
Matieland 7602,
South Africa}
\email{l.demangos@gmail.com}
\author{T.M. Gendron}
\address{Instituto de Matem\'{a}ticas -- Unidad Cuernavaca, Universidad
Nacional Autonoma de M\'{e}xico, Av. Universidad S/N, C.P. 62210
Cuernavaca, Morelos, M\'{e}xico}
\email{tim@matcuer.unam.mx}
\subjclass{Primary 11R58,11F03; Secondary 11K60}
\keywords{quantum j-invariant, global function fields, Diophantine approximation}
\begin{document}
\vspace{2cm}

 \maketitle
 
 \begin{abstract}  
We introduce the quantum $j$-invariant in positive characteristic as a multi-valued, modular-invariant function of a local function field.  In this paper, we
  concentrate on basic definitions and questions of convergence.
  \end{abstract}

\section{Introduction}

In \cite{Ge-C}, a notion of modular invariant for quantum tori was introduced, which may be viewed analytically as a certain extension of the usual $j$-invariant to the real numbers.  
For any $\uptheta\in \R$ and $\upvarepsilon >0$, one defines first
the approximant $j_{\upvarepsilon}(\uptheta )$, using Eisenstein-like series over the set of ``$\upvarepsilon$ diophantine approximations''
\[ \Uplambda_{\upvarepsilon}(\uptheta ) =\{ n\in\Z|\; |n\uptheta-m|<\upvarepsilon \text{ for some }m\in\Z \}.\]
Then $j^{\rm qt}(\uptheta )$ is defined
to be the set of limits of the approximants as $\upvarepsilon\rightarrow 0$.

PARI-GP experiments indicate that $j^{\rm qt}(\uptheta )$ is multi-valued and yet remarkably, for all quadratics tested, the set of all approximants
is finite (see the Appendix in \cite{Ge-C}).  However due to the chaotic nature of the sets $\Uplambda_{\upvarepsilon}(\uptheta ) $,
explicit expressions for $j^{\rm qt}(\uptheta )$
have proven to be elusive: in \cite{Ge-C}, only a single value of $j^{\rm qt}(\upvarphi )$, where $\upvarphi=$ the golden mean,
was rigorously computed.  Moreover
the experimental observations made at the quadratics remain without proof.

This paper is the first in a series of two, in which we consider the analog of $j^{\rm qt}$ for a function field over a finite field.  In this case, the non archimedean nature of
the absolute value simplifies the analysis considerably, allowing us to go well beyond what was obtained in \cite{Ge-C}.  In particular,
for $f$ belonging to the function field analog of the reals,
the set $\Uplambda_{\upvarepsilon}(f) $ has the structure of an $\F_{q}$-vector space and it is possible to describe a basis for it
using the sequence of denominators of best approximations of $f$.  With this in hand, explicit formulas for the approximants $j_{\upvarepsilon}(f)$ and their
absolute values can be given, see Theorem \ref{convergencetheo}  and its proof.  Using these formulas, we are able to prove the multi-valued property of $j^{\rm qt}$, as well as give the characterization: $f$ is rational if and only if $j_{\upvarepsilon}(f)=\infty$ eventually.
In the sequel \cite{DGII}, we study the set of values of $j^{\rm qt}(f)$ for $f$ quadratic.

\noindent {\it Acknowledgements.} We thank E.-U. Gekeler as well as the referee for a number of useful suggestions.

\section{Quantum $j$ invariant}\label{QJInv}

Let $\F_{q}$ be the field with $q=p^{r}$ elements, $p$ a prime.  Let 
$k=\F_{q}(T)$ where $T$ is an indeterminate and $A=\F_{q}[T]$.
Let $v$ be the place at $\infty$ i.e.\ that defined by
$v(f)=-\deg_{T}(f) $.   
The absolute value $|\cdot |$
associated to $v$ is \[ |f | = q^{-v(f)}=q^{\deg_{T}(f)}.\]
Denote by $k_{\infty}$ the local field obtained as the completion of $k$ w.r.t.\ $v$, and
by ${\bf C}_{\infty}$ the completion of a fixed algebraic closure $\overline{k_{\infty}}$ of $k_{\infty}$ w.r.t.\ $v$. For a review of basic function field arithmetic, see \cite{Goss}, \cite{Thak}, \cite{VS}, \cite{Weil}.

Consider the compact $A$-module \[ {\bf S}^{1}:=  k_{\infty}/A. \]   The character group of ${\bf S}^{1}$ may
be identified with the group of characters $e:k_{\infty}\rightarrow \C^{\times}$ trivial on $A$.   Define the basic character 
$e_{0}(g) =\upchi (c_{-1}(g))$, where $\upchi :(\F_{q},+)\rightarrow \C^{\times}$
is a nontrivial character
with values in the group of $p$th roots of unity and $c_{-1}(g)$ is the coefficient of $T^{-1}$ in the Laurent series expansion of $g$.   Then $e_{0}$ induces a character
of ${\bf S}^{1}$ and moreover every character of ${\bf S}^{1}$ is of the form $e(g)=e_{0}(ag)$ for some $a\in A$.  Indeed, every character
of $k_{\infty}$ may be written in the form $e(g)=e_{0}(rg)$ for some $r\in k_{\infty}$ {\it c.f.}\
 \cite{Weil}, sect. II.5.  If $e$ is trivial on $A$ but $r\not\in A$, we could find an element $a\in A$ with $c_{-1}(ar)\not\in {\rm Ker}(\upchi )$.
 But then $e(a)\not=1$, contradiction.

We will need the following analog of the classical Kronecker's Theorem (see \cite{Cas}).

\begin{theo}\label{kron}  For any $f\in k_{\infty}-k$, the image of $Af$ in ${\bf S}^{1}$ is dense.
\end{theo}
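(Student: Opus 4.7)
The plan is to reduce to a duality statement: a subgroup $H$ of a compact abelian group is dense if and only if every character vanishing on $H$ is trivial. Since ${\bf S}^{1}=k_{\infty}/A$ is compact (as $A$ is a cocompact discrete $\F_{q}$-subspace of $k_{\infty}$), standard Pontryagin duality applies, and this reformulates the theorem as: the only character of ${\bf S}^{1}$ which annihilates the image of $Af$ is the trivial one.

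The hope is to extract the desired conclusion using only the description of the character group of ${\bf S}^{1}$ given in the paragraph immediately preceding the statement. Concretely, I would suppose for contradiction that there exists a nontrivial character $e$ of ${\bf S}^{1}$ vanishing on the image of $Af$. By that paragraph, $e(g)=e_{0}(ag)$ for some $a\in A$, and nontriviality forces $a\neq 0$. The assumption that $e$ annihilates the image of $Af$ translates into the identity
\[ e_{0}(abf) \;=\; 1 \quad \text{for all } b\in A. \]

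Now I would set $r=af\in k_{\infty}$ and consider the character $\tilde e(g)=e_{0}(rg)$ of $k_{\infty}$. The displayed identity says precisely that $\tilde e$ is trivial on $A$. But this is exactly the situation handled in the paragraph preceding the theorem: the argument given there (find $a'\in A$ with $c_{-1}(a'r)\notin {\rm Ker}(\upchi)$ if $r\notin A$) shows that any character of $k_{\infty}$ of the form $e_{0}(r\,\cdot)$ trivial on $A$ must have $r\in A$. Hence $af\in A$, so
\[ f \;=\; \frac{af}{a} \;\in\; k, \]
contradicting $f\in k_{\infty}-k$.

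I do not expect a genuine obstacle: the work has essentially already been done in setting up the character theory of ${\bf S}^{1}$. The only point requiring mild care is the invocation of Pontryagin duality to pass from ``dense'' to ``annihilated only by the trivial character''; this is standard, but worth pointing to explicitly since the group ${\bf S}^{1}$ here is a function field analog rather than the usual circle. Everything else is a short rewriting of the hypothesis into a statement about characters of $k_{\infty}$ trivial on $A$, to which the preceding discussion applies verbatim.
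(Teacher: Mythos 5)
Your proof is correct, but it takes a genuinely softer route than the paper. The paper invokes the Weyl Criterion for uniform distribution in compact abelian groups and shows, via a translation/telescoping argument, that the normalized exponential sums $\frac{1}{q^{d+1}}\sum_{\deg a\leq d}e(x_a)$ vanish in the limit for every nontrivial character $e$ of ${\bf S}^1$; this establishes that $\{af \bmod A\}_{a\in A}$ is actually \emph{uniformly distributed} in ${\bf S}^1$, from which density follows. You instead go straight through Pontryagin duality: a subgroup of a compact abelian group is dense iff its annihilator in the dual is trivial, and you then use the already-established description of $\widehat{{\bf S}^1}$ (every character is $e_0(a\,\cdot)$, $a\in A$) together with the observation from the preceding paragraph (a character $e_0(r\,\cdot)$ of $k_\infty$ trivial on $A$ forces $r\in A$) to derive $af\in A$, hence $f\in k$. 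Both arguments hinge on the same essential point — for $f$ irrational, no nonzero $a\in A$ makes $e_0(af\,\cdot)$ trivial on $A$ — and indeed the paper uses this very fact to start its estimate (``there exists $b\in A$ such that $e(x_b)\neq 1$''). The trade-off is that your argument is shorter and avoids the quantitative machinery, but proves only density; the paper's argument is slightly longer but delivers the stronger equidistribution conclusion as a by-product. For the theorem as stated, your version suffices.
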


\begin{proof}  For any compact Hausdorff abelian group $G$, the Weyl Criterion ({\it c.f.}\ \cite{KN}, Corollary 1.2 of Chapter 4) is available. In particular, the 
$A$-sequence $x_{a}:= af\mod A$, $a\in A$, is uniformly distributed in ${\bf S}^{1}$ if and only if 
\begin{align}
\lim_{d\rightarrow\infty} \frac{1}{q^{d+1}} \sum_{\deg a\leq d} e(x_{a}) = 0 
\end{align}
for each non-trivial character $e: {\bf S}^{1}\rightarrow \C^{\times}$.  Since $f$ is irrational, for each non-trivial character $e$ there exists $b\in A$ such that
$e(x_{b})\not =1$.  Indeed, 
 if it were true
that for some non-trivial character $e$ and for all $b\in A$,
\[ e(x_{b})=e_{0}(ax_{b})=e_{0}(x_{ab})=e_{0}(bx_{a})=1,\] this would imply $af\in A$ i.e. $f\in k$, contradiction.
If $\updelta=\deg b$, then for $d>\updelta$ we have
\begin{align} (1-e(x_{b}))  \left( \sum_{\deg a\leq d} e(x_{a}) \right) & =\sum_{\deg a\leq d} e(x_{a})  - \sum_{\deg a\leq d} e(x_{a+b})  \label{firstsum}\\
& =  (1-e(x_{b}))  \left( \sum_{\deg a\leq \updelta} e(x_{a}) \right), \label{2ndsum}
\end{align}
since the map $a\mapsto a+b$ defines a bijection of the set of $a$ satisfying $\updelta< \deg (a)\leq d$, i.e.,
\[ \sum_{\updelta<\deg a\leq d} e(x_{a})  - \sum_{\updelta< \deg a\leq d} e(x_{a+b})=0  .\] 
As $1-e(x_{b})\not=0$, it may be canceled in (\ref{firstsum}),  (\ref{2ndsum}) to give equality of the two summations with limits $d$, $\updelta$, i.e., 
\[\lim_{d\rightarrow\infty} \frac{1}{q^{d+1}}  \sum_{\deg a\leq d} e(x_{a})  =\lim_{d\rightarrow\infty} \frac{1}{q^{d+1}}  \sum_{\deg a\leq \updelta} e(x_{a}) =0 . \]
 Thus $\{ x_{a}\}$ is uniformly distributed
in ${\bf S}^{1}$ and so
dense in ${\bf S}^{1}$.
\end{proof}

By a {\it lattice} $\Uplambda\subset {\bf C}_{\infty}$ is meant a discrete $A$-submodule of finite rank.  In what follows, we restrict to lattices
of rank 2 e.g. 
$\upomega\in\Upomega = {\bf C}_{\infty}- k_{\infty}$ defines the rank 2 lattice
$\Uplambda (\upomega) = \langle 1, \upomega\rangle_{A}=$ the $A$-module generated by $1,\upomega$.
Given a lattice $\Uplambda\subset  {\bf C}_{\infty}$, the {\it Eisenstein series} are
\[  E_{n}(\Uplambda) = \sum_{0\not=\uplambda\in\Uplambda} \uplambda^{-n},\quad n\in\N.\]
The {\it discriminant} is the expression
\[  \Updelta (\Uplambda) := (T^{q^{2}}-T)E_{q^{2}-1} (\Uplambda)+ (T^{q}-T)^{q}E_{q-1} (\Uplambda)^{q+1}\]
and setting
\[  g (\Uplambda) := (T^{q}-T)E_{q-1} (\Uplambda),\]
the (classical) {\it $j$ invariant} is defined
\[ j (\Uplambda) :=\frac{g (\Uplambda)^{q+1}}{\Updelta (\Uplambda)} = \frac{1}{\frac{1}{T^{q}-T}  + J  (\Uplambda)} \]
where
\[ J  (\Uplambda):=  \frac{T^{q^{2}}-T}{(T^{q}-T)^{q+1}} \cdot
\frac{E_{q^{2}-1} (\Uplambda)}{E_{q-1} (\Uplambda)^{q+1}}.
\]
When we restrict to $\Uplambda = \Uplambda (\upomega)$ we obtain a well-defined modular function 
\[  j: {\rm PGL}_{2}(A)\backslash\Upomega\longrightarrow  {\bf C}_{\infty}.\] 
See for example \cite{Gek}, \cite{Gek1}.

Let $f\in k_{\infty}$.  The $A$-module $\langle 1, f\rangle_{A}\subset k_{\infty}$ is a lattice only when $f\in k$; when $f\in k_{\infty}-k$, Theorem \ref{kron}
implies that $\langle 1, f\rangle_{A}$ is dense in $k_{\infty}$.  In order to define $j^{\rm qt}(f)$, we look for a suitable replacement for the lattice used to define $j(\upomega)$ for $\upomega\in\Upomega$.  We adapt the ideas found in \cite{Ge-C}.

For any $x\in k_{\infty}$ we denote the distance to the nearest element of $A$ by 
\[  \| x\| := \min_{a\in A}  |x-a| \in [0,1[.\]
Given $\upvarepsilon>0$, define\[\Uplambda_{\upvarepsilon}(f):=\{\uplambda\in A|\;  \| \uplambda f\|<\upvarepsilon\}.\]
Elements $\uplambda\in \Uplambda_{\upvarepsilon}(f)$ are referred to as {\it $\upvarepsilon$-approximations} of $f$ and the value $ \| \uplambda f\|$
is called the {\it error} of $\uplambda$.

Since the distance between distinct elements of $A$ is $\geq 1$, if we take $\upvarepsilon <1$, then for all $\uplambda\in \Uplambda_{\upvarepsilon}(f)$ there exists a unique $\uplambda^{\perp}\in A$ such that
\[ \| \uplambda f\| =| \uplambda f-\uplambda^{\perp}| <\upvarepsilon.\]
We call $\uplambda^{\perp}$ the {\it $f$-dual} of $\uplambda$, and we denote $\Uplambda_{\upvarepsilon}(f)^{\perp}=\{\uplambda^{\perp}|\; \uplambda\in \Uplambda_{\upvarepsilon}(f)\}$.

\begin{prop}\label{groupideal} For each $0<\upvarepsilon <1$, $\Uplambda_{\upvarepsilon}(f)$, $\Uplambda_{\upvarepsilon}(f)^{\perp}\subset A$ are isomorphic 
$\F_{q}$-vector spaces.  Moreover, they are $A$-ideals for $\e$ sufficiently small if and only if $f\in k$. 
\end{prop}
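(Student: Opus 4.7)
The first assertion breaks into verifying the $\F_q$-linear structure and then establishing the isomorphism. For the structure, I will appeal only to the ultrametric inequality and to $|c|\le 1$ for $c\in\F_q\subset A$. For $\uplambda,\upmu\in\Uplambda_{\upvarepsilon}(f)$ with duals $\uplambda^\perp,\upmu^\perp$ and $c\in\F_q$, the estimate
\[ |(c\uplambda+\upmu)f-(c\uplambda^\perp+\upmu^\perp)|\le\max\bigl(|c|\cdot|\uplambda f-\uplambda^\perp|,\,|\upmu f-\upmu^\perp|\bigr)<\upvarepsilon, \]
together with the fact that distinct elements of $A$ differ by absolute value at least $1$ and $\upvarepsilon<1$, forces $c\uplambda+\upmu\in\Uplambda_{\upvarepsilon}(f)$ with $(c\uplambda+\upmu)^\perp=c\uplambda^\perp+\upmu^\perp$ by uniqueness of the dual. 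Thus $\Uplambda_{\upvarepsilon}(f)$ is an $\F_q$-subspace of $A$, the map $\perp$ is $\F_q$-linear, and its image $\Uplambda_{\upvarepsilon}(f)^\perp$ is an $\F_q$-subspace of $A$.

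For the isomorphism, the kernel of $\perp$ equals $\{\uplambda\in A:|\uplambda f|<\upvarepsilon\}$, since $\uplambda^\perp=0$ precisely when $0$ is the unique nearest element of $A$ to $\uplambda f$, which for $\upvarepsilon<1$ amounts to $|\uplambda f|<\upvarepsilon$. For $f\ne0$ this kernel consists of polynomials of bounded degree, hence is finite-dimensional over $\F_q$. On the other hand, $\Uplambda_{\upvarepsilon}(f)$ itself is countably infinite-dimensional: when $f=a/b\in k$ in lowest terms one has $bA\subseteq\Uplambda_{\upvarepsilon}(f)$, while for $f\in k_\infty-k$ infinitely many independent best Diophantine approximations lie in $\Uplambda_{\upvarepsilon}(f)$ (and Theorem~\ref{kron} guarantees nontrivial $\upvarepsilon$-approximations for every $\upvarepsilon>0$). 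The first isomorphism theorem then makes $\Uplambda_{\upvarepsilon}(f)^\perp$ countably infinite-dimensional as well, so the two are abstractly isomorphic as $\F_q$-vector spaces.

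For the ideal characterization, the ``if'' direction is a direct calculation. If $f=a/b$ is in lowest terms with $\gcd(a,b)=1$, write $\uplambda a=qb+r$ with $\deg r<\deg b$; then $\|\uplambda f\|=|r|/|b|\ge q^{-\deg b}$ unless $b\mid\uplambda$, so for $\upvarepsilon\le q^{-\deg b}$ one gets $\Uplambda_{\upvarepsilon}(f)=(b)$, and tracing duals yields $\Uplambda_{\upvarepsilon}(f)^\perp=(a)$, both $A$-ideals. For the converse, suppose $f\in k_\infty-k$ and that $\Uplambda_{\upvarepsilon}(f)$ is an $A$-ideal; pick any nonzero $\uplambda\in\Uplambda_{\upvarepsilon}(f)$. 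Then $A\uplambda\subseteq\Uplambda_{\upvarepsilon}(f)$ forces $\|a\uplambda f\|<\upvarepsilon$ for every $a\in A$, but $\uplambda f\in k_\infty-k$ since $\uplambda\in k^\times$, so Theorem~\ref{kron} gives density of $\{a\uplambda f\bmod A\}$ in ${\bf S}^1$; by local constancy of $\|\cdot\|$ this forces some $\|a\uplambda f\|$ to attain the maximum value $q^{-1}$, contradicting the ideal hypothesis whenever $\upvarepsilon\le q^{-1}$. A parallel Kronecker argument applied to $\upmu/f\in k_\infty-k$ for nonzero $\upmu\in\Uplambda_{\upvarepsilon}(f)^\perp$ rules out $\Uplambda_{\upvarepsilon}(f)^\perp$ being an ideal in the same range. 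The main subtlety I foresee is pinning down quantitatively compatible ``sufficiently small'' thresholds in the two halves of this argument and carefully handling the fact that the supremum of $\|\cdot\|$ on ${\bf S}^1$ is attained only approximately through density.
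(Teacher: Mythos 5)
Your proof establishes the proposition and follows the paper's overall template, but you diverge in two places worth discussing. \textbf{The isomorphism step.} The paper asserts flatly that $\uplambda\mapsto\uplambda^{\perp}$ is a bijection and hence an isomorphism $\Uplambda_{\upvarepsilon}(f)\to\Uplambda_{\upvarepsilon}(f)^{\perp}$. You instead observe that $\perp$ can have a kernel $\{\uplambda\in A:|\uplambda f|<\upvarepsilon\}$ and fall back on a cardinality argument: the kernel is finite-dimensional (for $f\ne 0$), $\Uplambda_{\upvarepsilon}(f)$ is countably infinite-dimensional, so by the first isomorphism theorem $\Uplambda_{\upvarepsilon}(f)^{\perp}$ is as well, and two countably-infinite-dimensional $\F_q$-spaces are isomorphic. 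Your caution is actually warranted: when $|f|<\upvarepsilon$ one has nonzero $\uplambda$ with $|\uplambda f|<\upvarepsilon$, so $\perp$ is not injective and the paper's bijectivity claim fails as literally stated; your dimension-count is the correct fix (though it produces only an abstract isomorphism rather than the natural map $\perp$, which \emph{is} an isomorphism whenever $|f|\ge\upvarepsilon$ --- a cleaner formulation would say which case you are in). Note that both proofs quietly exclude $f=0$, where $\Uplambda_{\upvarepsilon}(0)=A$ but $\Uplambda_{\upvarepsilon}(0)^{\perp}=\{0\}$. \textbf{The ``only if'' direction.} Both you and the paper invoke Theorem~\ref{kron}. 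The paper fixes $c\in k_{\infty}$ with $|c|<1$, produces $\uplambda'$ with $\uplambda\uplambda' f$ close to $c$, and derives a contradiction from the ultrametric inequality (that argument as written needs $\upvarepsilon\le|c|$ to rule out $(\uplambda\uplambda')^{\perp}=0$). You instead argue that density of $\{a\uplambda f\bmod A\}$ in ${\bf S}^1$ forces some $\|a\uplambda f\|$ to hit the maximal attainable value $q^{-1}$, contradicting $\upvarepsilon\le q^{-1}$; this is a clean equivalent, and since the proposition only requires non-ideality for ``sufficiently small'' $\upvarepsilon$, restricting to $\upvarepsilon\le q^{-1}$ is harmless (the paper in fact gets non-ideality for all $\upvarepsilon\in(0,1)$). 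Your separate treatment of $\Uplambda_{\upvarepsilon}(f)^{\perp}$ is unnecessary --- once $\Uplambda_{\upvarepsilon}(f)$ fails to be an ideal the conjunction in the statement already fails --- but it does no harm. The ``if'' direction is identical to the paper's.
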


\begin{proof}
Given $\uplambda,\uplambda'\in \Uplambda_{\upvarepsilon}(f)$,  
$ |(\uplambda+\uplambda')f-(\uplambda^{\perp}+\uplambda'^{\perp})|  =|(\uplambda f-\uplambda^{\perp})+(\uplambda'f-\uplambda'^{\perp})| 
                   \leq \max\{|\uplambda f-\uplambda^{\perp}|,|\uplambda'f-\uplambda'^{\perp}| \} 
                   <\upvarepsilon$, 
                   which shows that $\Uplambda_{\upvarepsilon}(f)$ is a group; in fact a vector space since multiplication by scalars leaves $|\cdot |$ invariant. The map $\uplambda\mapsto \uplambda^{\perp}$ defines a bijection respecting vector space operations, making $\Uplambda_{\upvarepsilon}(f)^{\perp}$
                  a vector space isomorphic to $\Uplambda_{\upvarepsilon}(f)$.  This takes care of the first statement. Concerning the second statement,  suppose that $f\in k$ and write $f=a/b$, where $a,b\in A$ are relatively prime. Then
                  for $\upvarepsilon\leq |b|^{-1}$, the inequality $|\uplambda (a/b)-\uplambda^{\perp}|<\upvarepsilon$ implies that $\uplambda a - \uplambda^{\perp}b=0$ or $\uplambda^{\perp} =  \uplambda a/b$.  Since
                  $(a,b)=1$, this implies that $b|\uplambda$
so that                   $\Uplambda_{\upvarepsilon}(f) \subset bA$.   The opposite inclusion $\Uplambda_{\upvarepsilon}(f) \supset bA$ is trivial.  
On the other hand, 
                  suppose $f\in k_{\infty}- k$ and let $\uplambda\in \Uplambda_{\upvarepsilon}(f)$.
                  By Theorem 1 for each $c\in k_{\infty}$ such that $|c|<1$ there exists $\uplambda' \in A$ such that $|\uplambda \uplambda' f-c|<\e$. If we assume $\uplambda \uplambda'\in \Uplambda_{\upvarepsilon}(f)$ there exists $(\uplambda \uplambda')^{\perp}\in A$ such that $|\uplambda\uplambda'f-(\uplambda\uplambda')^{\perp}|<\e$. However:\begin{align}\label{maxineq} |\uplambda\uplambda'f-(\uplambda\uplambda')^{\perp}| & \leq \max\{|\uplambda\uplambda'f-c|,|(\uplambda\uplambda')^{\perp}-c|\}.\end{align}As $|c|<1$, we have
                  \[     |(\uplambda\uplambda')^{\perp}-c|  \geq 1 > |\uplambda \uplambda' f-c|.\]
                   Thus the inequality in (\ref{maxineq}) is an equality, i.e.\ \[  \upvarepsilon> |\uplambda\uplambda'f-(\uplambda\uplambda')^{\perp}|=|(\uplambda\uplambda')^{\perp}-c|\geq 1,\]contradiction.  In particular, $\Uplambda_{\upvarepsilon}(f)$ is not an ideal for all $\upvarepsilon\in (0,1)$.  Therefore, $\Uplambda_{\upvarepsilon}(f)$ is an $A$-ideal for $\upvarepsilon$ sufficiently small if and only if $f\in k$.
\end{proof}

\begin{note} In the number field setting, the analogous set $\Uplambda_{\upvarepsilon}(\uptheta)$ for $\uptheta\in\R-\Q$  is {\it not} a group,
due to the archimedean nature of the absolute value
of $\R$, see \cite{Ge-C}.
\end{note}

\begin{note} If $f\in k_{\infty}-k$ then the minimal degree of the nonzero elements of $\Uplambda_{\upvarepsilon}(f)$  tends to infinity as $\e\rightarrow 0$ because there are only finitely many polynomials in $T$ of fixed degree.  In other words, 
\[  \lim_{\e\rightarrow 0} \min \{ |\uplambda|\, |\; \uplambda\in \Uplambda_{\e}(f)-0\} =\infty. \]
Thus 
\[ \bigcap_{\upvarepsilon>0}\Uplambda_{\upvarepsilon}(f)=\{ 0\}\] and we could not use the intersection to
define a lattice-like object with which one might hope to define the quantum $j$-invariant.
\end{note}

The idea now is to use $\Uplambda_{\upvarepsilon}(f)$ as if it were a lattice to define an approximation to $j^{\rm qt}(f)$.   We 
define the {\it $\upvarepsilon$-zeta function} of $f$ via:\[ \upzeta_{f,\upvarepsilon}(n):=\sum_{\uplambda\in \Uplambda_{\upvarepsilon}(f)-\{0\}\atop
\uplambda\text{ monic }}\uplambda^{-n}, \quad n\in \N . \]
It is easy to see that $ \upzeta_{f,\upvarepsilon}(n)$ converges, since it is a subsum of  
the {\it zeta function} of $A$ (see \cite{Thak}) $ \upzeta_{A}(n) = \sum_{a\in A\text{ monic}}a^{-n}$.  If we denote 
$\zeta_{\F_{q}} (n)= \sum_{c\in \F_{q}-\{ 0\}} c^{-n}$
then
\[ \zeta_{\F_{q}} (n)\cdot  \upzeta_{f,\upvarepsilon}(n) = \sum_{\uplambda\in \Uplambda_{\upvarepsilon}(f)-\{0\}}\uplambda^{-n}.  \]
Taking into account that $ \zeta_{\F_{q}} (q-1)= \zeta_{\F_{q}} (q^{2}-1)=-1$, we
define
\begin{align*} \Updelta_{\upvarepsilon} (f) & := 
(T^{q^{2}}-T)\sum_{\uplambda\in \Uplambda_{\upvarepsilon}(f)-\{0\}}\uplambda^{1-q^{2}}+ (T^{q}-T)^{q}\left(\sum_{\uplambda\in \Uplambda_{\upvarepsilon}(f)-\{0\}}\uplambda^{1-q}\right)^{q+1}
 \\
 & =-(T^{q^{2}}-T)\upzeta_{f,\upvarepsilon}(q^{2}-1)+ (T^{q}-T)^{q}\upzeta_{f,\upvarepsilon}(q-1)^{q+1}
 \end{align*} and 
\[g_{\upvarepsilon} (f) := (T^{q}-T) \sum_{\uplambda\in \Uplambda_{\upvarepsilon}(f)-\{0\}}\uplambda^{1-q}=-(T^{q}-T)\upzeta_{f,\upvarepsilon}(q-1).\]
Then the {\it $\upvarepsilon$-modular invariant} of $f$ is defined
\[ j_{\upvarepsilon}(f):=\frac{{g_{\upvarepsilon}}^{q+1}(f)}{\Updelta_{\upvarepsilon}(f)} = \frac{1}{\frac{1}{T^{q}-T}  -J_{\upvarepsilon}(f) } \]
where
\begin{align}\label{Jep} J_{\upvarepsilon}(f) :=  \frac{T^{q^{2}}-T}{(T^{q}-T)^{q+1}} \cdot
\frac{\upzeta_{f,\e}(q^{2}-1)}{\upzeta_{f,\e}(q-1)^{q+1}}.
\end{align}

The {\it quantum modular invariant} or {\it quantum j-invariant}  of $f$ is then 
\[  j^{\rm qt}(f) := \lim_{\e\rightarrow 0} j_{\upvarepsilon}(f)   \subset k_{\infty}\cup \{ \infty \}\]
where by $ \lim_{\e\rightarrow 0} j_{\upvarepsilon}(f) $ we mean the {\it set of limit points} of convergent sequences $\{ j_{\e_{i}}(f)\}$, $\e_{i}\rightarrow 0$.
Thus, by definition, $j^{\rm qt}(f)$ is in principle multi-valued, and so we denote it as such, writing
\[ j^{\rm qt}: (k_{\infty}\cup \{ \infty\})\multimap (k_{\infty}\cup \{ \infty\}),\quad j^{\rm qt}(\infty ):=\infty.\]
In the number field case, $j^{\rm qt}$ is only known experimentally to be multi-valued, having finitely many values for all real quadratics tested, see the Appendix
in \cite{Ge-C}.   In this paper, we will prove that $j^{\rm qt}$ is multi-valued and in the sequel \cite{DGII}, we will prove
that $\# j^{\rm qt}(f)<\infty$ for $f$ quadratic.

\begin{theo}\label{rationaltheorem}
If $f\in k$, then for $\upvarepsilon$ sufficiently small, $j_{\upvarepsilon}(f)=\infty$.  In particular, $j^{\rm qt}(f)=\infty$.
\end{theo}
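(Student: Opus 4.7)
The plan is to combine Proposition~\ref{groupideal} with a direct computation showing that $\Updelta_\upvarepsilon(f)$ vanishes for rational $f$ once $\upvarepsilon$ is small enough. Write $f = a/b$ with $a, b \in A$ coprime; after absorbing a scalar from $\F_q^{\times}$ we may assume $b$ is monic. By Proposition~\ref{groupideal}, for every $\upvarepsilon \leq |b|^{-1}$ we have $\Uplambda_\upvarepsilon(f) = bA$. Since $b$ is monic, the monic elements of $bA$ are precisely the products $b\cdot c$ with $c$ a monic element of $A$, so
\[
\upzeta_{f,\upvarepsilon}(n) \;=\; b^{-n}\,\upzeta_A(n)\qquad (n\in \N).
\]

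First I would substitute this scaling into the formulas for $g_\upvarepsilon(f)$ and $\Updelta_\upvarepsilon(f)$. Since $(q-1)(q+1)=q^2-1$, the two terms of $\Updelta_\upvarepsilon(f)$ share the common factor $b^{-(q^2-1)}$, giving
\[
\Updelta_\upvarepsilon(f) \;=\; b^{-(q^2-1)}\left[-(T^{q^2}-T)\,\upzeta_A(q^2-1) + (T^q-T)^q\,\upzeta_A(q-1)^{q+1}\right].
\]
Using $\zeta_{\F_q}(q-1) = \zeta_{\F_q}(q^2-1) = -1$ and $(-1)^{q+1}=1$, the bracketed expression is exactly $\Updelta(A)$, the discriminant evaluated on the rank-$1$ $A$-lattice $A \subset {\bf C}_\infty$.

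The key input is therefore $\Updelta(A) = 0$. Conceptually this holds because $A \subset {\bf C}_\infty$ is a rank-$1$ $A$-lattice (a scalar multiple of the Carlitz lattice), whose associated Drinfeld module $\phi_T$ has no $\tau^2$ term; the $\tau^2$-coefficient in the rank-$2$ formalism is, up to a nonzero normalization, precisely $\Updelta$. Concretely, the vanishing amounts to the identity
\[
(T^{q^2}-T)\,\upzeta_A(q^2-1) \;=\; (T^q-T)^q\,\upzeta_A(q-1)^{q+1},
\]
which one may derive by extracting the $z^{q^2}$-coefficient from the Carlitz functional equation (cf.\ \cite{Goss}, \cite{Thak}).

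Granting this, $\Updelta_\upvarepsilon(f) = 0$, while $g_\upvarepsilon(f) = -(T^q-T)b^{-(q-1)}\upzeta_A(q-1) \neq 0$, so
\[
j_\upvarepsilon(f) \;=\; \frac{g_\upvarepsilon(f)^{q+1}}{\Updelta_\upvarepsilon(f)} \;=\; \infty
\]
for every $\upvarepsilon \leq |b|^{-1}$, which yields the theorem. The only real obstacle is the vanishing $\Updelta(A)=0$; everything else is routine algebra once the shape of $\Uplambda_\upvarepsilon(f)$ is nailed down by Proposition~\ref{groupideal}.
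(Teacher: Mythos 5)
Your proof is correct and follows essentially the same route as the paper: both use Proposition~\ref{groupideal} to identify $\Uplambda_{\upvarepsilon}(f)$ with a principal ideal $(b)$ for small $\upvarepsilon$, pull the homogeneity factor $b^{-(q^2-1)}$ out of the zeta quotient, and thereby reduce the claim to the statement that $j$ is infinite on the degenerate rank-one lattice $A$. The only difference in packaging is that you make the key vanishing $\Updelta(A)=0$ explicit (as the $\tau^2$-coefficient of a rank-one Drinfeld module, which necessarily vanishes), whereas the paper reaches the same endpoint by citing that classical $j$ has a pole at the cusp.
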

\begin{proof}
By Proposition \ref{groupideal}, $\Uplambda_{\upvarepsilon}(f)$ is an ideal for sufficiently small $\e$, hence it is a principal ideal of the form $(h)$, $h\in A$.  
Therefore, the quotient of $\e$ zeta functions occurring in (\ref{Jep}) in the definition of $J_{\e}(f)$, being homogeneous of degree $1-q^{2}$, can be written
\begin{align*}  \frac{\upzeta_{f,\e}(q^{2}-1)}{\upzeta_{f,\e}(q-1)^{q+1}} & =-\frac{\sum_{0\not=\uplambda\in \Uplambda_{\e}(f)}\uplambda^{-q^{2}+1}}{\left(\sum_{0\not=\uplambda\in \Uplambda_{\e}(f)}\uplambda^{1-q}\right)^{q+1}} 
\nonumber \\
& =-\frac{h^{q^{2}-1}\sum_{0\not=a\in A} a^{-q^{2}+1}}{h^{q^{2}-1}\left(\sum_{0\not=a\in A}a^{1-q}\right)^{q+1}} \nonumber \\
 & = \frac{\upzeta_{A}(q^{2}-1)}{\upzeta_{A}(q-1)^{q+1}}.
\end{align*} 
This shows that $j_{\e}(f)$ is constant for small $\e$, hence $j^{\rm qt}(f)$ is a single point.  Moreover, since $A$ is the limit of the lattice $\langle 1,\upomega\rangle_{A}$, $\upomega\in\Upomega$, for $\upomega\rightarrow 0$, the above calculation shows that
for $\e$ small, $j_{\upvarepsilon}(f)=j^{\rm qt}(f)$ is the limit of classical $j$ at $\infty$.  But it is well-known that
classical $j$ has a pole at $\infty$, see \cite{Gek1}, .
\end{proof}
 \begin{prop} $j^{\rm qt}$ is invariant w.r.t.\ the action of ${\rm PGL}_{2}(A)$ and induces a multi-valued function
 \[  j^{\rm qt}: {\rm PGL}_{2}(A)\backslash(k_{\infty}\cup \{ \infty\})\multimap (k_{\infty}\cup \{ \infty\}).\]
 \end{prop}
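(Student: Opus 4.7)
The plan is to verify invariance on a set of generators of ${\rm PGL}_2(A)$. The group is generated by the affine stabiliser of $\infty$, consisting of the maps $f \mapsto cf + b$ with $c \in \F_q^\times$ and $b \in A$, together with the inversion $S: f \mapsto 1/f$, so it suffices to treat these two types; the claim for points of $k \cup \{\infty\}$ is automatic because they form a single ${\rm PGL}_2(A)$-orbit on which $j^{\rm qt}$ is identically $\infty$ by Theorem \ref{rationaltheorem}.

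For the affine generators, the key observation is the non-archimedean identity
\[ \|\uplambda(cf+b)\| = \|c\uplambda f + b\uplambda\| = \|c\uplambda f\| = \|\uplambda f\|, \]
valid for every $\uplambda \in A$ because $b\uplambda \in A$, $|c| = 1$, and multiplication by $c$ permutes $A$. Hence $\Uplambda_\upvarepsilon(cf+b) = \Uplambda_\upvarepsilon(f)$ for every $\upvarepsilon$, so the $\upvarepsilon$-zeta values and $j_\upvarepsilon$ itself coincide on the nose, and invariance of $j^{\rm qt}$ follows trivially.

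The substantive case is inversion. For sufficiently small $\upvarepsilon$ the map $\uplambda \mapsto \uplambda^\perp$ defines an $\F_q$-linear bijection
\[ \Uplambda_\upvarepsilon(f) \longrightarrow \Uplambda_{\upvarepsilon/|f|}(1/f), \]
with inverse the $(1/f)$-dualisation, as is seen by dividing $|\uplambda f - \uplambda^\perp| < \upvarepsilon$ by $|f|$. The ratio appearing in $J_\upvarepsilon$,
\[ R_\upvarepsilon(f) := \frac{\upzeta_{f,\upvarepsilon}(q^2-1)}{\upzeta_{f,\upvarepsilon}(q-1)^{q+1}}, \]
is homogeneous of degree $0$ in its summation variable, so replacing each summand $\uplambda$ by $\uplambda f$ leaves it unchanged; replacing $\uplambda f$ further by $\uplambda^\perp = \uplambda f - \delta_\uplambda$ (with $|\delta_\uplambda| < \upvarepsilon$) then yields precisely $R_{\upvarepsilon/|f|}(1/f)$ via the bijection above, the monic-vs-full-sum constant $\zeta_{\F_q}(q^2-1)/\zeta_{\F_q}(q-1)^{q+1}$ being the same on both sides. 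A short ultrametric calculation using $a^n-b^n = (a-b)\sum_{i+j=n-1} a^i b^j$ combined with $|\uplambda f| \geq |f|$ gives the uniform estimate $|(\uplambda^\perp)^{-n} - (\uplambda f)^{-n}| \leq \upvarepsilon/|f|^{n+1}$ for $n \in \{q-1, q^2-1\}$, so the zeta sums for $1/f$ at level $\upvarepsilon/|f|$ differ from their scaled counterparts for $f$ at level $\upvarepsilon$ by an error of size $O(\upvarepsilon)$. For any convergent subsequence $j_{\upvarepsilon_i}(f) \to v$, the quantitative lower bounds on $|\upzeta_{f,\upvarepsilon}(n)|$ supplied by Theorem \ref{convergencetheo} ensure that this error is negligible relative to the leading magnitudes, so $R_{\upvarepsilon_i/|f|}(1/f)$ and $R_{\upvarepsilon_i}(f)$ share the same limit; the rational formula for $j$ in terms of $J$ then gives $j_{\upvarepsilon_i/|f|}(1/f) \to v$, proving $j^{\rm qt}(f) \subseteq j^{\rm qt}(1/f)$. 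The opposite inclusion follows by swapping $f$ and $1/f$.

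The main obstacle is the final limit step: the $O(\upvarepsilon)$ bound is only useful if pathological cancellation making the $\upzeta$-values themselves of the same order as the corrections can be excluded. This is precisely what Theorem \ref{convergencetheo} accomplishes, by exhibiting an explicit $\F_q$-basis of $\Uplambda_\upvarepsilon(f)$ in terms of denominators of best approximations and thereby pinning down the magnitudes of $\upzeta_{f,\upvarepsilon}(q-1)$ and $\upzeta_{f,\upvarepsilon}(q^2-1)$; once those estimates are in hand, the inversion case reduces to a routine continuity argument.
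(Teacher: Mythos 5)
Your decomposition of ${\rm PGL}_2(A)$ into the affine stabiliser of $\infty$ plus the inversion $S$ is legitimate (since $A=\F_q[T]$ is Euclidean), and is a genuinely different organisation from the paper, which handles a general $M=\left(\begin{smallmatrix} r & s\\ t & u\end{smallmatrix}\right)\in{\rm GL}_2(A)$ in one stroke via $\uplambda\mapsto t\uplambda^\perp+u\uplambda:\Uplambda_\upvarepsilon(f)\to\Uplambda_{\upvarepsilon'}(M(f))$ with $\upvarepsilon'=\upvarepsilon/|tf+u|$; your map $\uplambda\mapsto\uplambda^\perp$ for inversion is exactly the specialisation of this to $M=\left(\begin{smallmatrix} 0 & 1\\ 1 & 0\end{smallmatrix}\right)$, and your affine case (where $\Uplambda_\upvarepsilon(cf+b)=\Uplambda_\upvarepsilon(f)$ on the nose) is correct. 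However, there is a quantitative gap in the final limit step for inversion, precisely at the point the paper's sketch also glosses over with ``one may check.''

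The problem is the uniform estimate $|(\uplambda^\perp)^{-n}-(\uplambda f)^{-n}|\leq\upvarepsilon/|f|^{n+1}$, which you obtain by bounding $|\uplambda f|\geq|f|$. This throws away the $|\uplambda|$-dependence, and the resulting $O(\upvarepsilon)$ error bound is \emph{not} negligible compared to the leading magnitudes of the $\upvarepsilon$-zeta values: as $\upvarepsilon=q^{l-\sum_{i=1}^{N+1}\deg a_i}\to 0$, every nonzero $\uplambda\in\Uplambda_\upvarepsilon(f)$ satisfies $|\uplambda|\geq|\bar{\tt q}_N|$, so the dominant term of $\sum_\uplambda(\uplambda f)^{-n}$ has absolute value $(|\bar{\tt q}_N||f|)^{-n}$, which for $n=q^2-1$ tends to $0$ far faster than $\upvarepsilon\leq|\bar{\tt q}_N|^{-1}$. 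Concretely, the ratio of your error bound to the leading magnitude is $\upvarepsilon|\bar{\tt q}_N|^n/|f|\leq|\bar{\tt q}_N|^{n-1}/|f|$, which diverges as $N\to\infty$; Theorem~\ref{convergencetheo} gives $|\upzeta_{f,\upvarepsilon}(n)|=|\bar{\tt q}_N|^{-n}$, a \emph{decreasing} magnitude, not an $\upvarepsilon$-independent lower bound as your wording suggests. The fix is to keep the $\uplambda$-dependence: the ultrametric telescoping actually gives $|(\uplambda^\perp)^{-n}-(\uplambda f)^{-n}|\leq\upvarepsilon/|\uplambda f|^{n+1}$, and since the error in the sum is bounded by the \emph{maximum} of the summand errors, one gets total error $\leq\upvarepsilon/(|\bar{\tt q}_N||f|)^{n+1}$, hence a relative error $\upvarepsilon/(|\bar{\tt q}_N||f|)\to 0$. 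Equivalently, writing $\uplambda^\perp=\uplambda f\,(1-\updelta_\uplambda/(\uplambda f))$ with $|\updelta_\uplambda/(\uplambda f)|\leq\upvarepsilon/(|\bar{\tt q}_N||f|)$ makes the multiplicative correction uniformly tend to $1$, which is what the continuity argument actually needs.
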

 
 \begin{proof} Essentially the same as the proof of Theorem 1 in \cite{Ge-C}. By Theorem \ref{rationaltheorem}, it is enough to
 prove ${\rm PGL}_{2}(A)$-invariance for $f\in k_{\infty}-k$. In brief,  if $M=\left(  \begin{array}{cc}
 r & s \\
 t & u
 \end{array}\right)\in {\rm GL}_{2}(A)$, then the map 
 $ \uplambda\longmapsto t\uplambda^{\perp} + u\uplambda $ induces an isomorphism of $\F_{q}$-vector spaces 
 \[  M: \Uplambda_{\upvarepsilon}(f)\longrightarrow  \Uplambda_{\upvarepsilon'}(M(f)) ,\quad \upvarepsilon' = \frac{\upvarepsilon}{ tf+u}.\]
Then one may check that $\{ \upvarepsilon_{i}\}$ produces a limit point of  $j^{\rm qt}(f)$ $\Leftrightarrow$ $\{ \upvarepsilon_{i}'\}$  produces a limit point of $j^{\rm qt}(M(f))$
and that these limit points coincide.
 \end{proof}

\section{Convergence}

For $f=f_{0}\in k_{\infty}-k$
let $a_{0}\in A$ be its polynomial part.  Then 
writing $f=a_{0}+ 1/f_{1}$, we let $a_{1}\in A$ be the polynomial part of $f_{1}$, and so forth:
 in this
way we obtain the {\it continued fraction expansion} of $f$:
\[   f = a_{0} + \frac{1}{f_{1}} =a_{0} + \frac{1}{ a_{1} + f_{2}}=\cdots .\]
The resulting series of iterated fractions 
\[  a_{0}, \;\; a_{0} + \frac{1}{ a_{1} }, \;\;  a_{0} + \frac{1}{ a_{1} +\frac{1}{a_{2}} },\;\; \dots \]
converges to $f$ and is infinite unless $f\in k$. 
Note that for all $i\geq 1$, $|1/f_{i}|>1$ and therefore $|a_{i}|>1$.  In particular, $a_{i}\not\in \F_{q}$ for all $i\geq 1$.
The continued fraction expansion is denoted
\[ f=[a_{0}, a_{1},\dots ] .\] 
Every  $f\in k_{\infty}-k$ may be so represented, and the representation is unique.
Moreover, $f\in k_{\infty}-k$ is quadratic if and only if its continued
fraction expansion is eventually periodic.  See \cite{Thak}, sect. 9.2.

The sequence of {\it denominators of best approximations} is defined recursively by 
\[ {\tt q}_{0}=1, {\tt q}_{1} =a_{1}, \dots , {\tt q}_{i} =a_{i} {\tt q}_{i-1} +  {\tt q}_{i-2}.\]
Note that 
\[ |{\tt q}_{n} | =|a_{1}\cdots a_{n}|= q^{\sum_{i=1}^{n}\deg(a_{i})} .\]
The corresponding sequence of duals $\{ {\tt q}_{n}^{\perp}\}$ is obtained by the recursion
\[{\tt q}^{\perp}_{0}=a_{0}, {\tt q}^{\perp}_{1} =a_{1}a_{0} +1, \dots , {\tt q}^{\perp}_{i} =a_{i} {\tt q}^{\perp}_{i-1} +  {\tt q}^{\perp}_{i-2}.\]  
Indeed, by \cite{Thak}, page 307:
\[ \| {\tt q}_{n}f\|=| {\tt q}_{n}f -{\tt q}^{\perp}_{n}| = \frac{1}{|{\tt q}_{n}||a_{n+1}|} =\frac{1}{|{\tt q}_{n+1}|}= q^{-\sum_{i=1}^{n+1}\deg(a_{i})}. \]

Since we will want to consider sums over monic polynomials, let us denote by
\[ \bar{\tt q}_{n} = c_{n}{\tt q}_{n}\]
the unique scalar multiple of ${\tt q}_{n}$ which is monic.  Clearly $|\bar{\tt q}_{n} |=|{\tt q}_{n} |$, $\bar{\tt q}_{n} ^{\perp}=c_{n}{\tt q}_{n}^{\perp}$
and $\| \bar{\tt q}_{n}f\| =\| {\tt q}_{n}f\|$.
It is a consequence of the above remarks that the following set forms a basis of $A$ as an $\F_{q}$-vector space:
\begin{align}\label{GenTpowbasis} 
\left\{   T^{\deg (a_{1})-1},\dots , 1; \; T^{\deg (a_{2})-1} \bar{\tt q}_{1},\dots , \bar{\tt q}_{1}; \; T^{\deg (a_{3})-1} \bar{\tt q}_{2},\dots ,
\bar{\tt q}_{2}; \; \dots  \right\} .
\end{align}
(The order in which the basis elements have been listed corresponds to decreasing errors, see (\ref{errormixedterm}) below.)
Using the basis (\ref{GenTpowbasis}), a typical element of $A$ can then be concisely written in the form
\begin{align}\label{conciseform}  a=c_{m}(T)\bar{\tt q}_{m} +c_{m+1}(T)\bar{\tt q}_{m+1}+ \cdots + c_{m'}(T) \bar{\tt q}_{m'},\quad 0\leq m\leq m', \end{align}
where $c_{i}(T)\in A$ is a polynomial of degree $\leq \deg (a_{i+1})-1$.  Notice that $a$ is monic
if and only if $c_{m'}(T) $ is monic.

\begin{lemm}\label{LambdaLemma}  
Let
$\upvarepsilon = q^{l-\sum_{i=1}^{N+1} \deg (a_{i})}$ for $0<l \leq \deg(a_{N+1})$. Then
\begin{align*} \Uplambda_{\upvarepsilon}(f ) =
{\rm span}_{\F_{q}} (T^{l-1}\bar{\tt q}_{N}, \dots , T\bar{\tt q}_{N}, \bar{\tt q}_{N};\; T^{\deg (a_{N+2})-1}\bar{\tt q}_{N+1}, \dots ).
\end{align*}
\end{lemm}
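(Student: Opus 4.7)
The plan is to exploit the non-archimedean absolute value, the basis (\ref{GenTpowbasis}), and the explicit formula for $\|{\tt q}_nf\|$ quoted from \cite{Thak}. The first step is to compute the error of each basis element: for $0\leq j\leq \deg(a_{n+1})-1$, I claim
\[ \|T^{j}\bar{\tt q}_{n}f\| \;=\; q^{j-\sum_{i=1}^{n+1}\deg(a_{i})}.\]
Indeed, $T^{j}\bar{\tt q}_{n}f - T^{j}\bar{\tt q}_{n}^{\perp}=T^{j}(\bar{\tt q}_{n}f-\bar{\tt q}_{n}^{\perp})$, whose absolute value is $q^{j}\cdot\|\bar{\tt q}_nf\|=q^{j-\sum_{i=1}^{n+1}\deg(a_i)}$; since $j\leq\deg(a_{n+1})-1<\sum_{i=1}^{n+1}\deg(a_i)$, this quantity is less than $1$, and any $b\in A$ closer to $T^{j}\bar{\tt q}_{n}f$ would produce an element of $A$ of absolute value $<1$, forcing $b=T^{j}\bar{\tt q}_{n}^{\perp}$. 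In particular, the errors of the basis elements are pairwise distinct powers of $q$, and are strictly decreasing in the order they are listed in (\ref{GenTpowbasis}).

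Next, I would translate the condition $\|af\|<\varepsilon$ into a condition on the coordinates of $a$ in the basis (\ref{GenTpowbasis}). Writing $a=\sum_{i,j}c_{i,j}T^{j}\bar{\tt q}_{i}$ with $c_{i,j}\in\F_{q}$, the candidate approximation $a^{\perp}:=\sum_{i,j}c_{i,j}T^{j}\bar{\tt q}_{i}^{\perp}\in A$ gives
\[ af-a^{\perp}=\sum_{i,j}c_{i,j}T^{j}(\bar{\tt q}_{i}f-\bar{\tt q}_{i}^{\perp}).\]
Because the summands have pairwise distinct absolute values, the ultrametric inequality is an equality and
\[\bigl|af-a^{\perp}\bigr|=\max_{(i,j):\,c_{i,j}\neq 0}q^{j-\sum_{s=1}^{i+1}\deg(a_{s})}.\]
The same argument as in Step 1 shows that this is exactly $\|af\|$: any strictly better $b\in A$ would force $a^\perp-b$ to be a nonzero element of $A$ with absolute value less than $1$, which is impossible. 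Hence
\[\|af\|=\max_{(i,j):\,c_{i,j}\neq 0}\|T^{j}\bar{\tt q}_{i}f\|.\]

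Finally, I would compare these error exponents to $l-\sum_{i=1}^{N+1}\deg(a_{i})$. A term $T^{j}\bar{\tt q}_{i}$ contributes an error $<\varepsilon$ iff $j-\sum_{s=1}^{i+1}\deg(a_{s})<l-\sum_{s=1}^{N+1}\deg(a_{s})$. For $i\geq N+1$ this holds automatically (using $j\leq\deg(a_{i+1})-1$ and $l\geq 1$); for $i=N$ it reduces to $j<l$; for $i<N$ the inequality fails because $\deg(a_{N+1})\geq l$ forces the left side to be at least $-\sum_{s=1}^{N+1}\deg(a_s)+\deg(a_{N+1})\geq l-\sum_{s=1}^{N+1}\deg(a_s)$. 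Combining, $a\in\Uplambda_{\varepsilon}(f)$ iff $c_{i,j}=0$ for all $i<N$ and $c_{N,j}=0$ for $j\geq l$, which is exactly the claimed span.

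The only delicate step is the identification $\|af\|=\max_{(i,j)}\|T^{j}\bar{\tt q}_{i}f\|$; everything else is bookkeeping with the degree exponents. The distinctness of the individual errors—which is what makes the ultrametric inequality an equality—is what drives the whole argument, so I would emphasize it carefully when writing the proof.
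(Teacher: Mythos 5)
Your proposal is correct and follows essentially the same route as the paper's proof: compute the error $\|T^{j}\bar{\tt q}_{n}f\|=q^{j-\sum_{i=1}^{n+1}\deg(a_i)}$ for each basis element, observe that these errors are pairwise distinct (hence strictly decreasing in the order the basis is listed), and invoke the ultrametric equality to conclude that the error of a general $a=\sum c_{i,j}T^{j}\bar{\tt q}_{i}$ is the maximum of the errors of the basis elements appearing. You simply spell out in detail the step the paper compresses into ``no other basis elements may be involved.''
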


\begin{proof}  A general element $T^{r}\bar{\tt q}_{n}$ in the basis displayed in (\ref{GenTpowbasis}), where $0\leq r\leq \deg (a_{n+1})-1$, produces the error
\begin{align}\label{errormixedterm}  \| T^{r}\bar{\tt q}_{n}f\| = q^{r-\sum_{i=1}^{n+1} \deg (a_{i})}. \end{align}
Indeed we have $(T^{r}\bar{\tt q}_{n})^{\perp}=T^{r}\bar{\tt q}_{n}^{\perp}$ and 
\[  \| T^{r}\bar{\tt q}_{n}f\| =|T^{r}\bar{\tt q}_{n}f - T^{r}\bar{\tt q}_{n}^{\perp}|=|T^{r}|\|\bar{\tt q}_{n}f\|=q^{r-\sum_{i=1}^{n+1} \deg (a_{i})}. \]
From this the inclusion $\supset$ follows.  In view of the strictly decreasing pattern of errors of elements of the basis 
(\ref{GenTpowbasis}) and the non archimedean nature of $|\cdot |$, no other
basis elements may be involved in an element of $\Uplambda_{\upvarepsilon}(f)$, giving the equality
claimed.
\end{proof}

%Once again $\upvarepsilon = q^{l-\sum_{i=1}^{N+1}\deg (a_{i})}$
Denote $\upbeta_{N+1}:=\bar{\tt q}_{N+1}/\bar{\tt q}_{N}$.

 \begin{theo}\label{convergencetheo}  Let $f\in k_{\infty}-k$, $\upvarepsilon = q^{l-\sum_{i=1}^{N+1}\deg (a_{i})}$.  Then
\[ |j_{\upvarepsilon}(f)| = \left\{ \begin{array}{ll}
q^{q^{2}} & \text{if $l=1$ and $|\upbeta_{N+1}|>q$} \\
q^{q^{2}+q-1} & \text{otherwise} .
\end{array}
\right.\]
% if $l\not=1$, $|j_{\upvarepsilon}(f)|=q^{q^{2}+q-1}$ and if $l=1$, $|j_{\upvarepsilon}(f)|= q^{q^{2}}$.
   \end{theo}
   
   The proof of Theorem \ref{convergencetheo} will follow immediately from two Lemmas, which we now present.
%   \begin{note} Corollary 1 of \cite{DGI} is not affected by this correction.  Moreover, Theorem 3 is not used in the papers \cite{DGII}--\cite{DGIV}, so this correction has no impact on them either.
%\end{note}
   % We follow here the more efficient method developed in \cite{DGIII}. 
By Lemma \ref{LambdaLemma},
   \begin{align}\label{rescale} \frac{\Uplambda_{\upvarepsilon}(f)}{\bar{\tt q}_{N}} = {\rm span}_{\F_{q}} \{  1,\upalpha_{1},\upalpha_{2} \dots \}:={\rm span}_{\F_{q}} \{  1,\dots , T^{l-1},\upbeta_{N+1}, \dots \} . \end{align}
   The function $j_{\upvarepsilon}(f)$ may be calculated using the vector space (\ref{rescale}) in place of $\Uplambda_{\upvarepsilon}(f)$.  In particular, we may calculate  $j_{\upvarepsilon}(f)$ using the rescaled zeta function
   \[ \frac{\upzeta_{f,\upvarepsilon}(q^{n}-1)}{(\bar{\tt q}_{N})^{1-q^{n}}}=1+\sum_{i=1}^{\infty}\Upomega_{i}(q^{n}-1),\]   where
  \[ \Upomega_{i}(q^{n}-1) := \sum (c_{0}+\cdots + c_{i-1}\upalpha_{i-1}+\upalpha_{i})^{1-q^{n}},\] and where the sum is over $c_{0},\dots ,c_{i-1}\in\F_{q}$.  
    %Let $\mathfrak{a}$, $ \Upomega_{i}^{\mathfrak{a}}(q^{n}-1)$ be as above.  

   %   The following is Lemma 5 of \cite{DGIII}:
    \begin{lemm}\label{omegalemma}   When $i=1$, 
\[   \Upomega_{1}(q^{n}-1)  =\frac{\upalpha_{1}^{q^{n}}-\upalpha_{1}}{\prod_{c\in\F_{q}}(c+\upalpha_{1}^{q^{n}})}\quad\text{and}\quad 
 \left| \Upomega_{1}(q^{n}-1)\right| = |\upalpha_{1}|^{q^{n}(1-q)}.\]
More generally, for all $i\geq 1$,
$ \left|  \Upomega_{i}(q^{n}-1)\right|   
%= |\upalpha |^{n(q-1)(q^{l+1}-1)-(q-1)}/|\upalpha|^{n(q-1)q^{l+1}} \\
\leq |\upalpha_{i}|^{q^{n}(1-q)}$.
\end{lemm}

\begin{proof} 
% The case $i=1$ was proved in \cite{DGII}; for convenience we reproduce the proof here.  
Write $\upalpha=\upalpha_{1}$.  Then 
%We want to show that
%\[  \left| \sum_{c\in\F_{q}}(c+\upalpha)^{1-q^{n}}\right|=|\upalpha|^{q^{n}(1-q)}. \] 
\begin{align}\label{lemmafraction} \Upomega_{1}(q^{n}-1)=
\sum_{c}\frac{c+\upalpha}{c+\upalpha^{q^{n}}}=
 \frac{\sum_{c} (c+\upalpha)\prod_{d\not=c}(d+\upalpha^{q^{n}})}{\prod_{c}(c+\upalpha^{q^{n}})} .
\end{align}
Denote by $s_{i}(c)$ the $i$th elementary symmetric function on $\F_{q}-\{ c\}$.  Thus, $s_{0}(c)=1$, $s_{1}(c)=\sum_{d\not=c}d$, etc. Then the numerator of (\ref{lemmafraction}) may be written as 
%\frac{\sum_{c\in\F_{q}}\left(\prod_{c\not=d\in\F_{q}}(c+\upalpha) \right)^{q-1} }{\prod_{c\in\F_{q}} (c+\upalpha ))^{q-1}}  = 
%\frac{\sum_{c\in\F_{q}}\left( s_{q-1}(c) +  s_{q-2}(c)\upalpha+\cdots  s_{1}(c)\upalpha^{q-2}+\upalpha^{q-1} \right)^{q-1} }{\prod_{c\in\F_{q}} (c+\upalpha ))^{q-1}} 
\begin{align*}
\sum_{j=0}^{q-1}\bigg(\sum_{c} (c+\upalpha)s_{q-1-j}(c)\bigg)\upalpha^{jq^{n}} .\\
% \left( \sum_{c}cs_{q-1}(c) - \sum_{c}cs_{q-2}(c)\upalpha^{q^{n}} + \cdots -\sum_{c}cs_{1}(c)\upalpha^{q^{n}(q-2)} +\sum_{c} c\upalpha^{q^{n}(q-1)} \right) .\\
%\upalpha\left( \sum_{c}s_{q-1}(c) - \sum_{c}s_{q-2}(c)\upalpha^{q^{n}} + \cdots -\sum_{c}s_{1}(c)\upalpha^{q^{n}(q-2)} +\left(\sum_{c} 1\right)\upalpha^{q^{n}(q-1)}\right).
\end{align*}
Note that there is no constant term, and the coefficient of $\upalpha$ is $\prod_{d\not=0}d=-1$.
%We will show that the high degree term in the numerator is $\upalpha^{q^{n}}$, that is, that the terms having exponent $>q^{n}$ have coefficient $0$.
%By writing $\upalpha'=\upalpha^{q^{n}}$ we can reduce to the case $n=0$. 
Now
\[  \sum_{c}cs_{q-2}(c)= \sum_{c\not=0}cs_{q-2}(c)=\sum_{c\not=0} c\prod_{d\not=0,c}d =\sum_{c\not=0}c\cdot (- c^{-1})=(q-1)(-1)=1,\]
which is the coefficient of $\upalpha^{q^{n}}$.
Moreover, $\sum_{c}s_{q-2}(c)=  s_{q-2}(0)  -\sum_{c\not=0}c^{-1}=0$, so the $\upalpha^{q^{n}+1}$ term vanishes.  
For $i<q-2$, we claim that
\begin{align*}
%\label{zerosum} 
\sum_{c} cs_{i}(c)=0= \sum_{c} s_{i}(c). \end{align*}
When $i=0$,  $s_{0}(c)=1$ for all $c$, 
the terms $\upalpha^{q^{n}(q-1)}$, $\upalpha^{q^{n}(q-1)+1}$ have coefficients $\sum_{c}c=\sum_{c}1=0$ and so vanish.
When $i=1$, we have $q>3$,
%$s_{1}(c)$ only appears in the coefficients of $\upalpha^{q^{n}(q-2)}$ and $\upalpha^{q^{n}(q-2)+1}$,
%hence without loss of generality we may assume $q>3$.  
so $s_{1}(c)=-c$ and  \[  \sum_{c}s_{1}(c)=-\sum_{c}c=0=-\sum_{c}c^{2} =\sum_{c}cs_{1}(c)\]
since the sums occurring above are power sums over $\F_{q}$ of exponent $1,2<q-1$.
%When $i=2$,  $q>4$ and 
%\begin{align*} \sum_{c} s_{2}(c) & =\frac{1}{2}\sum_{c}\sum_{d\not=c}d\sum_{e\not=d,c}e =\frac{1}{2} \sum_{c} \sum_{d\not=c} d(s_{1}(c) -d)  \\
%& =
%\frac{1}{2}\sum_{c}( -cs_{1}(c)+c^{2})= \sum_{c}c^{2}=0.
%\end{align*}
%Likewise $\sum_{c} cs_{2}(c)=\sum_{c}c^{3}=0$.
For general $i<q-2$, we have $q>i+2$ and
%we may restrict to $q>i+2$.  Inductively, 
\[ \sum_{c} s_{i}(c) = \sum_{c} P(c)\]
where $P(X)$ is a polynomial over $\F_{q}$ of degree $i<q-2$.   Hence  $\sum_{c} P(c)= \sum_{c} cP(c)=0$, since again, these are sums of powers of $c$ of exponent less than $q-1$.
Thus 
the numerator is $ \upalpha^{q^{n}}-\upalpha $
and the absolute value claim follows immediately.
When $i>1$, for each $\vec{c}$, let $\vec{c}_{+}=(c_{1},\dots, c_{i-1})$ and write 
$ \upalpha_{\vec{c}_{+}} = c_{1}\upalpha_{1} + \cdots + c_{i-1}\upalpha_{i-1} +\upalpha_{i}$.
Note trivially that $|\upalpha_{\vec{c}_{+}}|=|\upalpha_{i}|$.  Then by part (1),
\begin{align*}
 \left|  \Upomega_{i}(q^{n}-1)\right| = \left| \sum_{\vec{c}_{+}}\sum_{c} (c+\upalpha_{\vec{c}_{+}})^{1-q^{n}}\right|
 \leq \max \{ |\upalpha_{\vec{c}_{+}}|^{q^{n}(1-q)} \} =  |\upalpha_{i}|^{q^{n}(1-q)}.
\end{align*}
\end{proof}
%Then we may write the normalized zeta function
   % \[\frac{ \upzeta_{f,\upvarepsilon}(n(q-1))}{{\bar{\tt q}_{N}^{n(1-q)}}}= 1 + \sum_{i=1}^{\infty} \Upomega_{i}(n(q-1)).\]
The rescaling (\ref{rescale}) permits us to calculate
    \[  j_{\upvarepsilon}(f) =(T^{q}-T)^{q+2}\cdot \frac{\left( 1 +  \Upomega_{1}(q-1) + \cdots  \right)^{q+1}}{\Updelta},\quad \Updelta := U-V, \]
where 
   \begin{align*}
U & = \left( (T^{q}-T)(1+\Upomega_{1}(q-1)+\cdots ) \right)^{q+1} \\
& =T^{q(q+1)}-T^{q^{2}+1}-T^{2q}+T^{q+1}+T^{q(q+1)}\sum_{c} (c+\upalpha_{1})^{1-q} + \text{lower}
\end{align*}
(in the above, we use that $(T^{q}-T)^{q+1}=(T^{q^{2}}-T^{q})(T^{q}-T)$) and 
\begin{align*} V &= (T^{q}-T)(T^{q^{2}} -T)(1+\Upomega_{1}(q^{2}-1)+\cdots  ) \\
& = T^{q(q+1)}-T^{q^{2}+1}-T^{q+1}+T^{2}+T^{q(q+1)}\sum_{c} (c+\upalpha_{1})^{1-q^{2}} + \text{lower}.
\end{align*}
Thus \[ \Updelta= -T^{2q}+2T^{q+1}-T^{2} +T^{q(q+1)}\sum_{c} (c+\upalpha_{1})^{1-q} + \text{lower}.\]
%The following is Lemma 6 of \cite{DGIII}:
Notice that we have made use of the estimates in Lemma \ref{omegalemma} to write ``$+ \text{lower}$'' in the above lines.

\begin{lemm}\label{DeltaLemma} $
|\Updelta| =\left\{ 
\begin{array}{ll}
q^{q+1} & \text{if  $|\upalpha_{1}|=q$} \\
q^{2q} & \text{otherwise}
%q^{2q} & \text{if $|\upalpha_{1}|=1$ and $|-T^{2q}+T^{q(q+1)}\sum (c+f)^{1-q}|=q^{2q}$} \\
 \end{array}
\right. 
$
\end{lemm}
 \begin{proof}  Suppose first that $|\upalpha_{1}|=q$.  Then $\upalpha_{1}=T$ or $\bar{\tt q}_{N+1}/\bar{\tt q}_{N}$ where the (unnormalized) best approximations
 satisfy ${\tt q}_{N+1}=a_{N+1}{\tt q}_{N}+{\tt q}_{N-1}$ for $a_{N+1}$ linear.  Since the normalized best approximations are monic, it follows that we may write $\upalpha_{1}=T+\updelta$
 where $|\updelta |<q$.  In particular, by Lemma  \ref{omegalemma}, item (1), we have
 \begin{align*} \left| \sum_{c} (c+\upalpha_{1})^{1-q} -\sum_{c} (c+T)^{1-q} \right| & = \left|\frac{(\upalpha_{1}^{q}-\upalpha_{1})\prod_{c}(c+T^{q})  
 -(T^{q}-T)\prod_{c}(c+\upalpha_{1}^{q})}{\prod_{c} \big( (c+\upalpha_{1}^{q})(c+T^{q})\big)}\right| \\
 & < q^{q(1-q)}.   
 \end{align*}
 Therefore, we may write 
 \[ \Updelta= -T^{2q}+2T^{q+1}+T^{q(q+1)}\sum_{c} (c+T)^{1-q} + \text{lower}.\]
 By Lemma \ref{omegalemma}, item (1),
 \begin{align*}
  -T^{2q} +2T^{q+1}+ T^{q(q+1)}\sum_{c} (c+T)^{1-q}  & =  \\ 
  \frac{(-T^{2q}+2T^{q+1} )\cdot \prod_{c} (c+T^{q})+T^{q(q+1)}\cdot (T^{q}-T)}{\prod_{c} (c+T^{q})} & = \\
  \frac{T^{q(q+1)+1}+ \text{lower} }{\prod_{c} (c+T^{q})}.
  \end{align*}
%so the $T^{q^{2}+2q}$ terms cancel. 
%From the proof of Lemma \ref{omegalemma}, the numerator of the second term above
%is $T^{q}+T$.  
%The second highest order term in the numerator is therefore $T^{q(q+1)+1}$ and 
% \[ -T^{2q}\sum_{c}cT^{q(q-1)}=0.\]
It follows that,
$\left| -T^{2q}+2T^{q+1}+T^{q(q+1)}\sum (c+T)^{1-q} \right|=q^{q+1} ,
 %<q^{q+1}=|T^{q+1}|
$
 and we conclude that in this case,
$ | \Updelta|=q^{q+1}$.  
 %When $q=3$, 
 %the penultimate term in $\prod(c+T^{q})$ is $-T^{2q}\sum_{c}cT^{q(q-1)}=0$.
% Hence, 
  %\[ \left| -T^{2q}+2T^{q+1}+T^{q(q+1)}\sum (c+T)^{1-q} \right| <\]
  If $|\upalpha_{1}|>q$, by Lemma \ref{omegalemma}, item (1),
 \[  \left|T^{q(q+1)}\sum (c+\upalpha_{1})^{1-q}\right|=q^{q(q+1)}\cdot |\upalpha_{1}|^{q(1-q)}<q^{2q}=|T^{2q}| \]
 hence $ | \Updelta|=q^{2q}$.
\end{proof}

\begin{proof}[Proof of Theorem 3]When $l\not=1$, then $\upalpha_{1}=T$: thus $|\upalpha_{1}|=q$, $|\Updelta|=q^{q+1}$ and $|j_{\upvarepsilon}(f)|=q^{q^{2}+q-1}$.  The same is true when $l=1$
and $\upbeta_{N+1}=\upalpha_{1}$ satisfies $|\upbeta_{N+1}|=q$.  If $l=1$ and $|\upbeta_{N+1}|=|\upalpha_{1}|>q$, then $|\Updelta|=q^{2q}$ and  $|j_{\upvarepsilon}(f)|=q^{q^{2}}$.  
\end{proof}

Combining Theorems \ref{rationaltheorem} and \ref{convergencetheo}, we arrive at the 
\begin{coro} $f\in k$ $\Leftrightarrow$ $ j_{\upvarepsilon}(f)=\infty$ for $\upvarepsilon$ sufficiently small $\Leftrightarrow$
$\infty\in j(f)$.
\end{coro}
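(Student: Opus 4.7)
The plan is to chain together the two preceding theorems to establish all three equivalences; no new calculation is needed. The key observation is the sharp dichotomy they provide: for any $0<\upvarepsilon<1$, either $f\in k$ and $j_{\upvarepsilon}(f)=\infty$ eventually (by Theorem \ref{rationaltheorem}), or $f\in k_{\infty}-k$ and $|j_{\upvarepsilon}(f)|=q^{2q-1}$ (by Theorem \ref{convergencetheo}). In particular these two regimes are mutually exclusive for small $\upvarepsilon$, and the absolute value $q^{2q-1}$ is finite, so $j_{\upvarepsilon}(f)\neq\infty$ in the irrational case.

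For the first equivalence, $f\in k\Leftrightarrow j_{\upvarepsilon}(f)=\infty$ for small $\upvarepsilon$, the forward direction is precisely Theorem \ref{rationaltheorem}. For the reverse direction I would argue by contrapositive: if $f\in k_{\infty}-k$, then Theorem \ref{convergencetheo} gives $|j_{\upvarepsilon}(f)|=q^{2q-1}<\infty$ for every $\upvarepsilon<1$, hence $j_{\upvarepsilon}(f)\neq\infty$ for any such $\upvarepsilon$.

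For the second equivalence, $j_{\upvarepsilon}(f)=\infty$ eventually $\Leftrightarrow \infty\in j^{\rm qt}(f)$, the forward direction is immediate from the definition of $j^{\rm qt}(f)$ as the set of limit points of convergent sequences $\{j_{\upvarepsilon_{i}}(f)\}$ with $\upvarepsilon_{i}\to 0$: any such sequence is eventually the constant $\infty$, so $\infty$ is a (the) limit point. For the converse, suppose $\infty\in j^{\rm qt}(f)$ and pick $\upvarepsilon_{i}\to 0$ with $j_{\upvarepsilon_{i}}(f)\to\infty$. If $f$ were in $k_{\infty}-k$, Theorem \ref{convergencetheo} would force $|j_{\upvarepsilon_{i}}(f)|=q^{2q-1}$ for all $i$ with $\upvarepsilon_{i}<1$, contradicting divergence to $\infty$. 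Hence $f\in k$, and Theorem \ref{rationaltheorem} yields $j_{\upvarepsilon}(f)=\infty$ for all sufficiently small $\upvarepsilon$.

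There is no real obstacle here; the corollary is genuinely a bookkeeping synthesis of the two main theorems of this section, with the uniform boundedness $|j_{\upvarepsilon}(f)|=q^{2q-1}$ in the irrational case doing all the work of ruling out $\infty$ as a limit point. I would present the argument in a short paragraph structured as the two equivalences above.
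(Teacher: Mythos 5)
Your argument is correct and matches the paper exactly: the corollary is stated with no proof beyond the phrase ``Combining Theorems \ref{rationaltheorem} and \ref{convergencetheo},'' and your chaining of the dichotomy from those two theorems (with the uniform bound $|j_{\upvarepsilon}(f)|=q^{2q-1}$ ruling out $\infty$ as a limit point in the irrational case) is precisely the intended synthesis.
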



\begin{thebibliography}{00}


\bibitem [1] {Cas}  Cassels, J.W.S., {\it An Introduction to Diophantine Approximation}. Cambridge Tracts in Mathematics and Mathematical Physics, {\bf 45}. Cambridge University Press, New York, 1957. 
\bibitem [2] {DGII} Demangos, L \& Gendron, T.M., Quantum $j$-Invariant in Positive Characteristic II: Formulas and Values at the Quadratics. 
\bibitem [3]{Ge-C}  Casta\~{n}o Bernard, C. \& Gendron, T.M., Modular invariant of quantum tori.   {\it Proc. Lond. Math. Soc.} {\bf 109} (2014), Issue 4, 1014--1049. 
\bibitem [4]{Gek} E. U. Gekeler, Zur Arithmetik von Drinfeld Moduln, Math. Ann. {\bf 262} (1983), 167--182. 
\bibitem [5]{Gek1} E. U. Gekeler, On the coefficients of Drinfeld modular forms, Invent. Math. {\bf 93} (1988), Issue 3, 667--700.
\bibitem [6]{Goss} D. Goss, {\it Basic structures of Function Field Arithmetic}, Springer-Verlag, Berlin, 1998.
\bibitem [7]{KN} Kuipers, L \& Niederreiter,H., {\it Uniform Distribution of Sequences.} Dover, N.Y., 2012.
\bibitem [8]{Thak} Thakur, D.S., {\it Function Field Arithmetic}, World Scientific, Singapore, 2004.
\bibitem [9]{VS} Villa Salvador, Gabriel Daniel, {\it Topics in the Theory of Algebraic Function Fields}, Birkh\"{a}user, Boston, 2006.
\bibitem [10]{Weil} Weil, Andr\'{e}, {\it Basic Number Theory}, Springer-Verlag, Berlin, 1974.
\end{thebibliography}
 \end{document}